\newtheorem{theorem}{Theorem}[section]
\newtheorem{lemma}[theorem]{Lemma}
\theoremstyle{definition}
\newtheorem{definition}[theorem]{Definition}
\theoremstyle{remark}
\numberwithin{equation}{section}
\begin{document}

\title{An Asymptotic Determinant Bound on 0-1 Matrices with Fixed Row Sums}

\author{Justin Semonsen}
\address{Department of Mathematics, Rutgers University, Piscataway, New Jersey 08854}
\curraddr{Department of Mathematics, Rutgers University, Piscataway, New Jersey 08854}
\email{js2118@math.rutgers.edu}
\thanks{Supported by the Simons  Collaboration on Algorithms and Geometry  through Simons Foundation award 332622 and Swastik Kopparty's NSF grant CCF-1814409.}

\date{February 4, 2020}

\keywords{Combinatorics}

\begin{abstract}
This paper improves previously known bounds on the determinant of 0-1 matrices where each row has fixed support size. This uses a method based on Scheinerman's, with new analyses to improve upon his conjectures.

\end{abstract}
\maketitle
\vspace{-.5cm}
\section{Determinant Bounds}
Hadamard's maximum determinant problem asks for the largest determinant among all $n\times n$ zero-one matrices \cite{Had_1893}. This problem has been well studied, but many questions still remain unanswered.

For the remainder of this paper, let $\vec{A}_i$ be the $i$th row of the matrix $A$. Also let $\Vert \cdot \Vert$ represent the standard $l_2$ norm on vectors.

We will look at the maximum determinant of the restricted class of zero-one matrices defined below:

\begin{definition}
$R(n,k) = \{A \in M_{n \times n}(\{0,1\}): \Vert \vec{A}_i\Vert = \sqrt{k}\,\forall\,1 \leq i \leq n\}$ for $1 \leq k \leq n$.
\end{definition}


We can also characterize the matrices in $R(n,k)$ as the matrices in $M_{n \times n}(\{0,1\})$ whose rows each sum to $k$. This means the vector of all ones is a left eigenvector with eigenvalue $k$.

The question originally posed by Scheinerman \cite{Scheinerman:2019aa} is what is the largest determinant that can be attained in $R(n,k)$? This lets us define the following quantity:


\begin{definition} $M(n,k) = \max_{A \in R(n,k)} \vert \det(A) \vert$ \end{definition}




The first upper bound on $M(n,k)$ that we can give is due to Hadamard \cite{Had_1893}: This says that $\vert \det(A) \vert \leq \prod_{i=1}^n \Vert \vec{A}_i \Vert$. Since $\Vert \vec{A}_i \Vert = \sqrt{k}$ for every $i$, this means $\vert \det(A) \vert \leq k^\frac{n}{2}$. Equivalently, we can write $M(n,k)^2 \leq k^n$, as the bounds on the square of the determinant are often more concise.







A more general result of Ryser \cite{Ryser_1956} gives that $M(n,k) \leq k(k-\lambda)^\frac{n-1}{2}$ where $\lambda = \frac{k(k-1)}{n-1}$. When $k$ is large relative to $n$, this is an exponential improvement on Hadamard, but Ryser showed this bound can't be tight unless $\lambda$ is integral.

This means that if $k$ is fixed and $n$ grows to infinity, not only is this bound not tight, but $\lambda \to 0$ and Ryser's bound has the same exponential growth as Hadamard's bound.

Bruhn and Rautenbach \cite{Bruhn_2018} gave an exponentially better bound for $k=2$, which gives $M(n,k) \leq \left(\sqrt[3]{2}\right)^n$. They could only conjecture that a similar exponential improvement was possible for $k=3$ and larger. In other words, until Scheinerman \cite{Scheinerman:2019aa} it was only known that $\limsup_{n \to \infty} M(n,k)^\frac{1}{n} \leq \sqrt{k}$.

We note that if we have an $m \times m$ matrix $A$ for which $\det(A) = c^m$, then $\det(A \otimes I_t) = c^{mt}$ for any $t$, and thus $\limsup_{n \to \infty} M(n,k)^\frac{1}{n} \geq c$. A projective plane of order $k-1$ has an $(k^2 - k +1) \times (k^2 - k +1)$ incidence matrix $A$ with $k$ ones in each row, and thus is in $R(k^2 - k +1, k)$. Since $AA^\top  = J + (k-1) I$, we have that $\det(A)^2 = \det(J_{k^2-k+1} + (k-1)I_{k^2-k+1}) = k^2(k-1)^{k^2 - k}$, so $\limsup_{n \to \infty} M(n,k)^\frac{1}{n} \geq k^\frac{1}{k^2 - k +1}(k-1)^\frac{k^2 - k}{2k^2 - 2k +2} = \sqrt{k} - \frac{1}{2\sqrt{k}} + O(k^\frac{-3}{2})$.

Scheinerman \cite{Scheinerman:2019aa} was able to close this gap slightly by decomposing the matrix into blocks of rows, then analyzing each block separately:

\begin{theorem}Let $q$ be an integer with $1 \leq q \leq k$. Then $M(n,k) \leq c_{q,k}^n$ for $$c_{q,k} = (q+k-1)^{\frac{1}{2q}\left(1 -\frac{q-1}{k}\right)}(k-1)^{\frac{q-1}{2q}\left(1 -\frac{q-1}{k}\right)} k^\frac{q-1}{2k}$$
\end{theorem}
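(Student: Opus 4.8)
The plan is to combine a block form of Hadamard's inequality with a greedy extraction of highly structured blocks of rows. First I would reduce to the case $q\mid n$: if $A\in R(n,k)$ then $A\otimes I_q\in R(nq,k)$ and $\det(A\otimes I_q)=(\det A)^q$, so a bound $M(nq,k)\le c_{q,k}^{nq}$ immediately gives $M(n,k)\le c_{q,k}^n$. So assume $q\mid n$, partition the rows of $A$ into $n/q$ sets of size $q$, and let $B_j$ be the corresponding $q\times n$ submatrix. Applying Fischer's inequality (the block form of Hadamard's inequality) to the positive semidefinite matrix $AA^\top$, whose diagonal blocks are the matrices $B_jB_j^\top$, yields
\[
(\det A)^2=\det(AA^\top)\le\prod_{j=1}^{n/q}\det(B_jB_j^\top),
\]
so everything reduces to choosing the partition well and bounding each factor.

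For \emph{every} block, Hadamard's inequality for positive semidefinite matrices gives $\det(B_jB_j^\top)\le\prod_{a}(B_jB_j^\top)_{aa}=k^q$. I would call a block \emph{good} if its $q$ rows pairwise share a common $1$-entry, and for such a block I claim the stronger bound
\[
\det(B_jB_j^\top)\le D:=(k-1)^{q-1}(k+q-1)=\det\!\big((k-1)I_q+J_q\big),
\]
with $J_q$ the all-ones $q\times q$ matrix. To prove this, set $P=(k-1)I_q+J_q$ and $N'=B_jB_j^\top-P$; the diagonal of $N'$ vanishes and its off-diagonal entries are $\abs{\mathrm{supp}(\vec A_a)\cap\mathrm{supp}(\vec A_b)}-1\ge 0$ since the block is good. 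Now $\det(B_jB_j^\top)=\det(P)\det(I_q+P^{-1}N')$, and since $P^{-1}=\tfrac1{k-1}\big(I_q-\tfrac1{k+q-1}J_q\big)$ has nonpositive off-diagonal entries while $N'$ is entrywise nonnegative with zero diagonal, $\operatorname{tr}(P^{-1}N')\le 0$; moreover $I_q+P^{-1}N'$ is similar to the positive semidefinite matrix $P^{-1/2}(B_jB_j^\top)P^{-1/2}$, so its eigenvalues $1+\mu_i$ are all $\ge 0$. Hence $\log\det(I_q+P^{-1}N')=\sum_i\log(1+\mu_i)\le\sum_i\mu_i=\operatorname{tr}(P^{-1}N')\le 0$, giving $\det(B_jB_j^\top)\le\det(P)=D$.

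Next I would produce a partition with many good blocks, greedily: as long as more than $\tfrac{n(q-1)}{k}$ rows are still unassigned, the number of $1$'s among them exceeds $n(q-1)$, so some column contains a $1$ in at least $q$ of those rows; put $q$ such rows into a good block and remove them. When the process halts at most $\tfrac{n(q-1)}{k}$ rows remain, and (since rows are removed $q$ at a time and $q\mid n$) these can be grouped into further blocks of size $q$. Writing $\alpha=1-\tfrac{q-1}{k}$, this yields at least $\tfrac{\alpha n}{q}$ good blocks. Finally, since $D\le k^q$ (indeed $D/k^q=(1-1/k)^{q-1}(1+(q-1)/k)\le 1$), the quantity $\prod_j\det(B_jB_j^\top)\le D^{\,g}(k^q)^{\,n/q-g}=(k^q)^{n/q}(D/k^q)^g$ with $g\ge\alpha n/q$ is maximized at $g=\alpha n/q$, whence
\[
(\det A)^2\le D^{\alpha n/q}(k^q)^{(1-\alpha)n/q}=\big(D^{\alpha}k^{q(1-\alpha)}\big)^{n/q},
\]
and a direct computation identifies $D^{\alpha}k^{q(1-\alpha)}=(q+k-1)^{\alpha}(k-1)^{(q-1)\alpha}k^{q(q-1)/k}=c_{q,k}^{2q}$, so $\abs{\det A}\le c_{q,k}^n$.

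I expect the main obstacle to be the good-block bound $\det(B_jB_j^\top)\le D$: finding the correct reference matrix $(k-1)I_q+J_q$ and exploiting that its inverse has nonpositive off-diagonal entries is the heart of the argument. The rest is bookkeeping, though one must set up the greedy count and the inequality $\log(1+x)\le x$ carefully so that the exponents land exactly on $\alpha=1-(q-1)/k$; the hypothesis $q\le k$ enters precisely as the condition $\tfrac{q-1}{k}<1$ that makes the greedy extraction nontrivial.
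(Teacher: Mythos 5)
Your proposal is correct and follows essentially the same route the paper sketches for Scheinerman's theorem: Fischer's inequality applied to a partition of the rows into blocks, the block bound $\det(B_jB_j^\top)\le (k+q-1)(k-1)^{q-1}=\det\big((k-1)I_q+J_q\big)$ for blocks whose rows share a column of ones, and the pigeonhole/averaging count showing the greedy extraction yields at least $\bigl(1-\tfrac{q-1}{k}\bigr)\tfrac{n}{q}$ such blocks, with $k^q$ for the rest. The only differences are cosmetic: you re-prove the Olkin-type block bound directly via the $\operatorname{tr}(P^{-1}N')\le 0$ and $\log(1+x)\le x$ argument rather than citing it, and you add a harmless (and in fact unnecessary) tensor-product reduction to the case $q\mid n$.
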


For $q =1$, this matches the Hadamard bound, but when $q > 1$, we have that $c_{q,k} < \sqrt{k}$. In addition, Scheinerman showed that for some sequence $q_k$, $c_{q_k,k} = \sqrt{k} - \frac{t_1}{2\sqrt{k}} + O(k^\frac{-3}{2})$ for a constant $t_1 \approx 0.096$.

Scheinerman also proposed an algorithm for combining his methods to provide an even tighter bound, but his analysis only provided an improvement for $k \leq 27$. In this work, we provide a tighter analysis of this algorithm, showing that it improves on Scheinerman's other techniques for bounding the determinant, and analyze the asymptotic behavior of this bound.

We also use similar techniques to improve upon Scheinerman's algorithm, and show that the resulting bound is an improvement upon the previous algorithm.

\section{Decomposition bounds}
Since these techniques depend heavily on the decomposition by Scheinerman in \cite{Scheinerman:2019aa}, here we will give a simple overview of the needed quantities and definitions as they appear in this paper.

For any $m \times n$ matrix $B$ we define $Vol(B) = \sqrt{\vert \det(BB^\top) \vert}$. Since the inner matrix product is the Gram matrix of the rows, this quantity is essentially the $m$-dimensional volume of the box with sides given by the rows of $B$.

This measure has two useful properties: First, if $A$ is an $n \times n$ square matrix, then $Vol(A) = \sqrt{\vert \det(AA^\top) \vert} = \sqrt{\det(A)^2} = \vert \det(A) \vert$. Secondly if $B_1$ and $B_2$ are $m_1 \times n$ and $m_2 \times n$ matrices respectively, then let $B = \left[\begin{smallmatrix} B_1\\B_2\end{smallmatrix}\right]$, the $(m_1 + m_2) \times n$ block matrix with $B_1$ above $B_2$. We can see that $BB^\top = \left[\begin{smallmatrix} B_1B_1^\top & B_1B_2^\top\\B_2B_1^\top& B_2B_2^\top\end{smallmatrix}\right]$, so by Fischer's Inequality:

\begin{equation}
\begin{split}
Vol(B) &= \sqrt{\left\vert\det\left(\left[\begin{array}{cc} B_1B_1^\top & B_1B_2^\top\\B_2B_1^\top& B_2B_2^\top\end{array}\right]\right) \right\vert}\\
&\leq \sqrt{\left\vert\det\left(\left[\begin{array}{cc} B_1B_1^\top & 0 \\ 0 & B_2B_2^\top\end{array}\right]\right) \right\vert} = Vol(B_1)Vol(B_2)
\end{split}
\end{equation}

For convenience of notation we will sometimes use $M(n,k)^2$ instead of $M(n,k)$ in most calculations, as $M(n,k)^2 = \max_{A \in S_{n,k}} Vol(A)^2 = \max_{A \in S_{n,k}} \det(AA^\top)$.

Given a matrix $A \in R(n,k)$, we can decompose $A$ into its rows $\vec{A}_i$. Then by repeatedly using the submultiplicativity, we can see that $Vol(A)^2 \leq \prod_{i=1}^n Vol(\vec{A}_i)^2 = k^n$. This was already given by Hadamard, but other decompositions will yield better results.

Scheinerman noticed that if we decompose $A$ into blocks $A_i$ of rows such that each block has a column of all ones, then if $A_i$ is an $m_i \times n$ block, then $A_iA_i^\top$ is an $m_i$ by $m_i$ matrix with $k$ on the diagonal and strictly positive integers off of it. 

Using a result by Olkin \cite{Olkin_2014}, Scheinerman proved that $Vol(A_i)^2 \leq \det(J_{m_i} + (k-1)I_{m_i}) = (m_i+k-1)(k-1)^{m_i-1}$. Since the $A_i$ are a partition of the $n$ rows $\sum_i m_i = n$ and thus $Vol(A)^2 \leq \prod_i (m_i+k-1)(k-1)^{m_i-1} = (k-1)^n \prod_i \left(1+ \frac{m_i}{k-1}\right)$.

We can see that we have at most $n$ such partitions, so we can assume that there are exactly $n$, with some allowed to be of size 0. By Jensen's inequality balancing these sizes maximizes the product, but that is the same as making each row its own partition. We are hoping to find a smaller bound, and thus go for as large of blocks as we can.

With this in mind, Scheinerman proposed the following algorithm: At each step $i$, choose the column with the most 1s in it. Let the rows that have those ones be the block $A_i$, and repeat the same steps on the remaining rows to partition the entire matrix. This greedy algorithm gives us a method for finding a better decomposition of a matrix.

In order to provide an upper bound on this, Scheinerman noted that the average number of ones per column in an $m \times n$ matrix is $\frac{km}{n}$, so there is always a column with at least $\lceil\frac{km}{n}\rceil$ ones. While it might be possible to find a column with more ones, this is always guaranteed, and this allows us to bound the determinant for all $A \in R(n,k)$.

\subsection{Analysis of the greedy algorithm bound}
In this section, we use a new analysis technique to get the following result:

\begin{theorem}\label{old-main}
For every $k$, there is a $c_k$ such that $M(n,k) \leq c_k$ where $c_k = \sqrt{k} - \frac{t_2}{2\sqrt{k}} + O(k^\frac{-3}{2})$ for $t_2 = 1 - \frac{\pi^2}{12}$.
\end{theorem}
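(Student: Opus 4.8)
The plan is to turn Scheinerman's greedy decomposition into a sharp per-row cost estimate. Fix $A\in R(n,k)$ and run the greedy algorithm, producing blocks of row-sizes $m_1,m_2,\dots$ with $\sum_i m_i=n$; writing $r_0=n$ and $r_i=n-(m_1+\cdots+m_i)$, the average-column count forces $m_i\ge\lceil kr_{i-1}/n\rceil$, while Fischer submultiplicativity together with Olkin's block bound (both recalled above) gives $Vol(A)^2\le\prod_i g(m_i)$ with $g(m)=(m+k-1)(k-1)^{m-1}$. The first move is to write $\log g(m)=m\log(k-1)+\log(1+\frac{m}{k-1})$, so that the bulky $(k-1)^{m-1}$ factors telescope against $\sum_i m_i=n$:
\[
  \log Vol(A)^2\ \le\ n\log(k-1)+\sum_i\log\!\left(1+\frac{m_i}{k-1}\right).
\]
Everything then reduces to an upper bound on $\sum_i\log(1+m_i/(k-1))$ valid for every admissible size sequence $(m_i)$.

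For that I would run a charging argument making precise the intuition that the extremal case is the ``greedy-minimal'' trajectory $m_i=\lceil kr_{i-1}/n\rceil$ — heuristically because $\log g$ is superadditive ($g(a)g(b)\ge g(a+b)$, i.e.\ $ab\ge0$), so oversized blocks never help and the positional constraint only lets us shrink sizes down to this minimum. Concretely, set $h(m)=\frac1m\log(1+\frac{m}{k-1})$, which is non-increasing because $\log(1+u)\ge u/(1+u)$, and define the cost of position $t$ by $\psi(t)=h(\lceil kt/n\rceil)$ for $1\le t\le n$; since $t\mapsto\lceil kt/n\rceil$ is non-decreasing, $\psi$ is non-increasing. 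The crux is the per-block inequality
\[
  \sum_{t=r_i+1}^{r_{i-1}}\psi(t)\ \ge\ \log\!\left(1+\frac{m_i}{k-1}\right)\qquad\text{for every admissible block}:
\]
for a minimal block it is immediate from monotonicity of $\psi$; for a larger block one peels off the minimal-length prefix, bounds each remaining $\psi$-value below by $h(\lceil kr_{i-1}/n\rceil)$, and is reduced once more to $h(\lceil kr_{i-1}/n\rceil)\ge h(m_i)$. Since the intervals $\{r_i+1,\dots,r_{i-1}\}$ partition $\{1,\dots,n\}$, the right-hand sides telescope, giving $Vol(A)^2\le(k-1)^n\exp\big(\sum_{t=1}^n\psi(t)\big)$ for every $A\in R(n,k)$.

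The next step is to evaluate $\sum_{t=1}^n\psi(t)$ by grouping $t$ according to $j=\lceil kt/n\rceil\in\{1,\dots,k\}$: each level contributes $h(j)$ from each of its $\frac nk+O(1)$ indices, so $\sum_{t=1}^n\psi(t)=\frac nk\sum_{j=1}^kh(j)+O(1)$, and taking $n$-th roots yields the claimed bound $\limsup_{n\to\infty}M(n,k)^{1/n}\le c_k$ with
\[
  \log c_k^2\ =\ \log(k-1)+\frac1k\sum_{j=1}^k\frac1j\log\!\left(1+\frac{j}{k-1}\right).
\]
For the asymptotics in $k$ I would recognize the last sum, after the substitution $x_j=j/(k-1)$, as $\frac1{k(k-1)}$ times a Riemann sum of spacing $1/(k-1)$ for $\int_0^1\frac{\log(1+x)}{x}\,dx$, which equals $\sum_{m\ge1}\frac{(-1)^{m-1}}{m^2}=\frac{\pi^2}{12}$; hence that term is $\frac{\pi^2}{12k}+O(k^{-2})$, and combined with $\log(k-1)=\log k-\frac1k+O(k^{-2})$ this gives $\log c_k^2=\log k-\frac{1-\pi^2/12}{k}+O(k^{-2})$. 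Taking square roots produces $c_k=\sqrt k-\frac{1-\pi^2/12}{2\sqrt k}+O(k^{-3/2})$, i.e.\ $t_2=1-\frac{\pi^2}{12}$.

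I expect the main obstacle to be the error bookkeeping. One must check that the only $\Theta(1/k)$ contributions to $\log c_k^2-\log k$ are the $-1/k$ coming from $\log(k-1)$ and the $+\pi^2/(12k)$ coming from the integral — every ceiling rounding and the Riemann-sum remainder surviving only at order $k^{-2}$ after division by $k$, and the level-grouping of $\psi$ costing only an $n$-independent $O(1)$ rather than something growing with $n$. The other delicate point is verifying the per-block inequality in the non-minimal case, where the positional constraint couples consecutive blocks; the monotonicity of $h$ is exactly what makes the peeling step go through, but the argument has to be arranged so that it applies uniformly to all admissible blocks, not merely the greedy-minimal ones.
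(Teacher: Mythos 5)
Your argument is correct, and it reaches the paper's bound by a genuinely different route. The paper encodes the greedy decomposition as an integer program in the block-size counts $x_j$, relaxes to an LP, and identifies the extremal value via duality (primal optimum $a_j=\frac{1}{kj}$ for $j\le k$, with an explicit dual certificate), which yields exactly your constant $\log c_k^2=\log(k-1)+\frac1k\sum_{j=1}^k\frac1j\log\bigl(1+\frac{j}{k-1}\bigr)$; your charging argument replaces the duality step by the pointwise bound $\psi(t)\ge h(\lceil kr_{i-1}/n\rceil)\ge h(m_i)$ on each block's interval, using only the greedy guarantee $m_i\ge\lceil kr_{i-1}/n\rceil$ and the monotonicity of $h$, and your Riemann-sum asymptotics for $\int_0^1\frac{\log(1+x)}{x}\,dx=\frac{\pi^2}{12}$ coincide with the paper's. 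What your approach buys is elementarity and a transparent identification of the extremal trajectory; what the LP framework buys (and your proof does not replace) is the machinery the paper reuses afterwards, e.g.\ to show that restricted-block-size variants such as $c_{q,k}$ can never beat $c_k$, and to analyze the column-deleting refinement. Two small points: your level-grouping of $\psi$ loses an $n$-independent $O(1)$, so as written you get $M(n,k)\le C_k\,c_k^n$ and hence the $\limsup$ statement rather than the clean $M(n,k)\le c_k^n$ the paper obtains exactly; this is harmless for the theorem's content (the stated ``$M(n,k)\le c_k$'' is a typo for $c_k^n$), and you can remove the constant entirely by noting via Abel summation over the fractional parts $f_j=\{jn/k\}$ that $\sum_{t=1}^n h(\lceil kt/n\rceil)\le\frac nk\sum_{j=1}^k h(j)$, since $h$ is non-increasing and $f_0=f_k=0$. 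Also, the ``peeling'' discussion in your per-block step is unnecessary: the single chain $\psi(t)\ge\psi(r_{i-1})\ge h(m_i)$ already covers minimal and non-minimal blocks uniformly.
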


This theorem gives $t_2 \approx 0.178$, closing the gap towards the lower bound given by the projective plane.

To get this bound from greedy algorithm, we will set up a linear programming problem to help us analyze it: Fix a matrix $A$ in $S(n,k)$. Let $x_j$ be the number of blocks of size $j$ in the decomposition given by Scheinerman's greedy algorithm. This gives us $n$ variables, as the blocks can be of size 1 up to $n$.

Since we always take the largest block, we know that after we remove all the blocks of size $i$ or larger, the $m$ remaining rows can't have any columns with $i$ ones in it. This means the average number of ones $\frac{km}{n}$ must be no bigger than $i-1$, so we can calculate the number of rows remaining by $m = n - \sum_{j=i}^n jx_j$. This means that the greedy decomposition of any matrix $A$ must satisfy the constraints $\sum_{j=i}^n kjx_j \geq (k-i +1)n$ for every $i \in [n]$.

Since the $x_i$ give us a partition of the $n$ rows, we also have that $\sum_{j=1}^n jx_j = n$, and that $x_i \geq 0$ for every $i$.

\begin{lemma}
Every greedy algorithm decomposition of a matrix $A \in R(n,k)$ has $x_j$ blocks of size $j$, where $x_j$ is an integral feasible solution to the following LP:

\begin{equation}
\begin{array}{ll@{}ll}
 \sum\limits_{j=i}^n kjx_j \geq (k-i+1)n, \hspace{.5cm}& i=1 ,..., n\\
 \sum\limits_{j=1}^n jx_j = n\\
                                                x_{j} \geq 0 ,&j=1 ,..., n
\end{array}
\end{equation}

\end{lemma}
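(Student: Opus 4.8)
The statement asserts two things: that the block sizes $x_j$ coming from Scheinerman's greedy algorithm satisfy the listed inequalities, and that $\sum_j j x_j = n$ with $x_j \ge 0$. The plan is to verify each constraint directly from the definition of the greedy algorithm, essentially transcribing the informal argument in the paragraph preceding the lemma into a clean inductive statement about the algorithm's intermediate states.

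First I would set up notation for the run of the algorithm: let $A = A^{(0)}$ and let $A^{(t)}$ be the matrix of rows remaining after $t$ steps; at step $t+1$ the algorithm picks a column of $A^{(t)}$ with the maximum number of $1$s, removes the rows meeting that column (call this block $B_{t+1}$, of size $s_{t+1}$), and continues. The nonnegativity $x_j \ge 0$ is immediate since $x_j$ counts blocks. The equality $\sum_{j=1}^n j x_j = n$ follows because the blocks $B_1, B_2, \dots$ partition the $n$ rows of $A$: every row is removed at exactly one step, and a block of size $j$ contributes $j$ to the count in exactly the $x_j$ summand. (One should note the size of a block is at most $n$, so the index range $j = 1, \dots, n$ is valid.)

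The main content is the family of inequalities $\sum_{j=i}^n k j x_j \ge (k-i+1)n$. Fix $i \in [n]$. The key observation is the greedy/maximality property: because the algorithm always extracts a block corresponding to a column of \emph{maximum} weight, once every block of size $\ge i$ has been removed, the remaining submatrix has no column with $\ge i$ ones — otherwise that column would have been available at some earlier step and would have produced a block of size $\ge i$ contradicting that all such blocks are gone. More precisely, consider the moment right after the last block of size $\ge i$ is removed; let the remaining matrix have $m$ rows. Every column of this $m \times n$ submatrix has at most $i-1$ ones. Its total number of $1$s is exactly $km$ (each of the $m$ rows still has row sum $k$, since removing rows does not change row sums), and this total is at most $(i-1)n$ by summing the column bound over the $n$ columns. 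Hence $km \le (i-1)n$. On the other hand, the rows removed so far are precisely those lying in blocks of size $\ge i$, so $n - m = \sum_{j \ge i} j x_j$, giving $m = n - \sum_{j=i}^n j x_j$. Substituting, $k\bigl(n - \sum_{j=i}^n j x_j\bigr) \le (i-1)n$, which rearranges to $\sum_{j=i}^n k j x_j \ge (k - i + 1)n$, as claimed. The same argument works for every $i$ using the appropriate intermediate state of the algorithm.

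The one point that needs a little care — and I'd flag it as the only real obstacle — is making the "once all blocks of size $\ge i$ are removed, no column has $\ge i$ ones" step rigorous, since blocks of size $\ge i$ and blocks of size $< i$ can be interleaved in the order the algorithm extracts them. The clean way is a monotonicity lemma: the maximum column weight of $A^{(t)}$ is nonincreasing in $t$ (removing rows can only decrease column sums), and the size of the block removed at step $t+1$ equals the maximum column weight of $A^{(t)}$; therefore the block sizes produced by the algorithm are themselves nonincreasing, so all blocks of size $\ge i$ occur as a prefix of the sequence $B_1, B_2, \dots$. Then "after the last block of size $\ge i$" is unambiguous — it is simply after some initial segment — and the submatrix at that moment is exactly $A^{(t)}$ for that $t$, whose maximum column weight is the size of the \emph{next} block, which is $< i$. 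With that lemma in hand the rest is the short computation above.
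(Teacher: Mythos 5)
Your proof is correct and follows essentially the same route the paper uses (the paper justifies this lemma only by the informal paragraph preceding it: after all blocks of size at least $i$ are removed, the remaining $m$ rows have no column with $i$ ones, so the average column weight $km/n$ is at most $i-1$, which rearranges to the stated constraint). Your added monotonicity observation that block sizes are nonincreasing is a reasonable way to make that informal step precise, but it is the same underlying argument.
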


A given set of $x_i$ generates the determinant bound $Vol(A)^2 \leq (k-1)^n \prod_{j=1}^n \left(1+ \frac{j}{k-1}\right)^{x_j}$. In order to find a bound for all matrices, we want to find the largest this could be for any matrix using the greedy algorithm.

While this doesn't fit the LP framework, we can remove the $(k-1)^n$ term and take the logarithm to give that our objective to be maximized is $\sum_{j=1}^n x_j\ln\left(1+ \frac{j}{k-1}\right)$.

Notice that since the right hand side of each bound is a multiple of $n$, we can simply scale down the variables to $a_j = \frac{x_j}{n}$. In addition, since each $a_j$ is non-negative, we can see that all the constraints with $i > k$ are trivially satisfied, and can thus be dropped. Similarly, note that the equality constraint ensures that the $i = 1$ constraint is trivially satisfied.

Lastly, we note that relaxing the integrality constraint can only weaken the bound, so we are left with the following lemma:

\begin{lemma}\label{old-proof}
For any fixed $k$, let $\alpha$ be the optimal solution of the following LP:

\begin{equation}\label{old-primal}
\begin{array}{ll@{}ll}
\text{maximize}  & \alpha = \sum\limits_{j=1}^{n} \ln\left(1+ \frac{j}{k-1}\right)a_j &\\
\text{subject to}& \sum\limits_{j=i}^n kja_j \geq k-i+1, \hspace{.5cm}& i=2 ,..., k\\
& \sum\limits_{j=1}^n ja_j = 1\\
& a_{j} \geq 0 ,&j=1 ,..., n
\end{array}
\end{equation}
Then $M(n,k)^2 \leq \gamma_k^{n}$ where $\gamma_k = (k-1)e^\alpha$.
\end{lemma}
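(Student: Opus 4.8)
The plan is to chain together the ingredients set up in the preceding discussion, starting from an arbitrary $A \in R(n,k)$ and ending at the LP optimum $\alpha$. First I would apply Scheinerman's greedy algorithm to $A$: it partitions the rows into blocks $A_i$, each containing an all-ones column, so Olkin's bound together with the submultiplicativity of $Vol$ coming from Fischer's inequality gives
\begin{equation*}
Vol(A)^2 \;\leq\; \prod_i (m_i+k-1)(k-1)^{m_i-1} \;=\; (k-1)^n\prod_{j=1}^n\left(1+\frac{j}{k-1}\right)^{x_j},
\end{equation*}
where $m_i$ is the size of $A_i$ and $x_j$ counts the blocks of size $j$; the last step just regroups the factors by block size. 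By the preceding lemma, $(x_j)$ is an integral feasible point of the LP stated there.

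Next I would pass to \eqref{old-primal}. Scaling $a_j := x_j/n$ divides every right-hand side by $n$, and the resulting $(a_j)$ satisfies all constraints of \eqref{old-primal} --- in fact the larger system before the constraints with $i=1$ or $i>k$ were dropped, these being redundant anyway (the $i=1$ constraint is forced to equality by $\sum_j ja_j=1$, and for $i>k$ the right-hand side $k-i+1$ is nonpositive while the left-hand side is a sum of nonnegative terms). Thus $(a_j)$ is feasible for \eqref{old-primal}, and since relaxing the integrality requirement only enlarges the feasible set, $\alpha$ bounds the objective value at $(a_j)$: $\sum_j x_j\ln(1+\frac{j}{k-1}) = n\sum_j a_j\ln(1+\frac{j}{k-1}) \le n\alpha$. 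Exponentiating, $\prod_j(1+\frac{j}{k-1})^{x_j} \le e^{n\alpha}$, so the displayed bound becomes $Vol(A)^2 \le (k-1)^n e^{n\alpha} = \gamma_k^n$. Since $A$ is square, $Vol(A)^2 = \det(A)^2$, and maximizing over all $A \in R(n,k)$ gives $M(n,k)^2 \le \gamma_k^n$.

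The one genuine subtlety --- and the step I would treat most carefully --- is that \eqref{old-primal} still carries $n$ as the upper index of its sums and the size of its variable set, so its optimum, call it $\alpha_n$, a priori depends on $n$. To get the $n$-independent constant in the statement I would note that a feasible point of the size-$n$ problem extends to a feasible point of the size-$n'$ problem for $n'>n$ by appending zeros (the constraint index $i$ ranges only over $2,\dots,k$, so nothing new is imposed), so $(\alpha_n)$ is nondecreasing in $n$; and it is bounded above uniformly, since for any feasible $(a_j)$
\begin{equation*}
\sum_j a_j\ln\!\left(1+\frac{j}{k-1}\right) \;\le\; \left(\max_{j\ge 1}\frac{1}{j}\ln\!\left(1+\frac{j}{k-1}\right)\right)\sum_j ja_j \;=\; \max_{j\ge 1}\frac{1}{j}\ln\!\left(1+\frac{j}{k-1}\right) < \infty.
\end{equation*}
Hence $\alpha := \sup_n \alpha_n$ is finite, and replacing each $\alpha_n$ by $\alpha$ in the previous paragraph only weakens the inequality, giving $M(n,k)^2 \le \gamma_k^n$ with $\gamma_k = (k-1)e^\alpha$ independent of $n$. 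Everything else is bookkeeping: the substantive work --- Fischer's inequality, Olkin's eigenvalue bound, and the correctness of the greedy constraints --- has already been done above, so I expect no further obstacle.
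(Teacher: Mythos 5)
Your proposal is correct and follows essentially the same route the paper takes in the discussion leading up to this lemma: greedy decomposition, the Olkin/Fischer volume bound $(k-1)^n\prod_j(1+\frac{j}{k-1})^{x_j}$, scaling $a_j = x_j/n$, discarding the redundant $i=1$ and $i>k$ constraints, and relaxing integrality. Your extra care about the $n$-dependence of the LP optimum is a reasonable addition (the paper instead settles this implicitly later, when Lemma \ref{old-calculation} exhibits the explicit optimal solution, which is supported on $j\leq k$ and hence independent of $n$), but it does not change the substance of the argument.
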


Since there are $k$ constraints and $n$ variables, we can define the dual linear program as follows:

\begin{equation}\label{old-dual}
\begin{array}{ll@{}ll}
\text{minimize}  & \beta = \sum\limits_{i=1}^{k} (k-i+1)b_i &\\
\text{subject to}& \sum\limits_{i=1}^j kj b_i \geq \ln\left(1+ \frac{j}{k-1}\right), \hspace{.5cm}& j=1,..., n\\
&b_i \leq 0 ,&i=2 ,..., k
\end{array}
\end{equation}

This formulation lets us give an explicit formula for the optimal $\alpha$:

\begin{lemma}\label{old-calculation}
The optimal solution to equation \ref{old-primal} is given by $a_j = \frac{1}{kj}$ for $1 \leq j \leq k$ and $a_j = 0$ otherwise. The optimal solution to equation \ref{old-dual} is given by $b_1 = \frac{\ln\left(1+ \frac{1}{k-1}\right)}{k}$ and $b_i =  \frac{\ln\left(1+ \frac{i}{k-1}\right)}{ik} -  \frac{\ln\left(1+ \frac{i-1}{k-1}\right)}{(i-1)k}$ for $2 \leq i \leq k$.
\end{lemma}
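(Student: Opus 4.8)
The plan is to verify that the two stated candidates form a feasible primal--dual pair whose objective values coincide; by weak LP duality this forces both to be optimal, and no separate complementary-slackness computation is needed (although, as it happens, every constraint turns out to be tight at these points).

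First I would confirm that $a_j=\tfrac{1}{kj}$ for $1\le j\le k$ and $a_j=0$ otherwise is feasible for \ref{old-primal}. Nonnegativity is immediate; the equality constraint holds because $\sum_{j=1}^{k} j a_j=\sum_{j=1}^{k}\tfrac1k=1$; and for each $i$ with $2\le i\le k$ one has $\sum_{j=i}^{k} kj\,a_j=\sum_{j=i}^{k}1=k-i+1$, so in fact every inequality is an equality. Next I would check feasibility of the stated $b$ for \ref{old-dual}. Writing $g(x)=\tfrac1x\ln\!\big(1+\tfrac{x}{k-1}\big)$, the formulas read $b_1=\tfrac{g(1)}{k}$ and $b_i=\tfrac1k\big(g(i)-g(i-1)\big)$ for $2\le i\le k$, so the sign condition $b_i\le 0$ reduces to showing that $g$ is nonincreasing on $[1,\infty)$. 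That follows from $g'(x)=x^{-2}\big(\tfrac{x/(k-1)}{1+x/(k-1)}-\ln(1+\tfrac{x}{k-1})\big)\le 0$, which is just the elementary bound $\ln(1+y)\ge\tfrac{y}{1+y}$ for $y>0$.

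For the remaining dual inequalities $\sum_{i=1}^{j} kj\,b_i\ge\ln\!\big(1+\tfrac{j}{k-1}\big)$ (with $b_i=0$ for $i>k$, matching the fact that those primal rows were dropped) I would exploit telescoping: $\sum_{i=1}^{j} b_i=\tfrac{g(j)}{k}=\tfrac{1}{jk}\ln\!\big(1+\tfrac{j}{k-1}\big)$ for $1\le j\le k$, so the constraint holds with equality in that range. For $j>k$ the left-hand side equals $\tfrac{j}{k}\ln\!\big(1+\tfrac{k}{k-1}\big)$, and I would establish $\tfrac{j}{k}\ln\!\big(1+\tfrac{k}{k-1}\big)\ge\ln\!\big(1+\tfrac{j}{k-1}\big)$ by noting equality at $j=k$ and comparing slopes in $j$: the left side has constant slope $\tfrac1k\ln(1+\tfrac{k}{k-1})$, while the right side has slope $\tfrac{1}{k-1+j}\le\tfrac{1}{2k-1}$ for $j\ge k$, and $\tfrac1k\ln(1+\tfrac{k}{k-1})\ge\tfrac{1}{2k-1}$ once more by $\ln(1+y)\ge\tfrac{y}{1+y}$ with $y=\tfrac{k}{k-1}$. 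I expect this estimate for the dropped constraints to be the only genuinely non-formal step; everything else is telescoping plus that single convexity inequality.

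Finally I would match the objectives. With $S_j=\sum_{i=1}^{j}b_i=\tfrac{1}{jk}\ln(1+\tfrac{j}{k-1})$ and $c_i=k-i+1$, Abel summation gives $\beta=\sum_{i=1}^{k}c_i(S_i-S_{i-1})=c_kS_k+\sum_{i=1}^{k-1}(c_i-c_{i+1})S_i=\sum_{i=1}^{k}S_i$, using $c_k=1$, $c_i-c_{i+1}=1$, and $S_0=0$. This equals $\sum_{i=1}^{k}\tfrac{1}{ik}\ln(1+\tfrac{i}{k-1})$, which is precisely $\alpha=\sum_{j=1}^{k}a_j\ln(1+\tfrac{j}{k-1})$ for the candidate $a$. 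Since a feasible primal solution and a feasible dual solution with equal objective values must both be optimal, this completes the argument; in particular $\alpha=\sum_{j=1}^{k}\tfrac{1}{kj}\ln\!\big(1+\tfrac{j}{k-1}\big)$, the value that feeds into $\gamma_k=(k-1)e^{\alpha}$.
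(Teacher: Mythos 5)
Your proposal is correct and follows essentially the same route as the paper: exhibit the stated primal and dual candidates, check feasibility (all non-trivial primal constraints tight; dual signs and the $j>k$ constraints via the monotonicity of $\frac{1}{x}\ln\left(1+\frac{x}{k-1}\right)$), and conclude optimality from equal objective values by LP duality. The only cosmetic difference is that you verify $\alpha=\beta$ explicitly by telescoping and Abel summation, whereas the paper gets it abstractly from the identity $\vec{c}^\top M^{-1}\vec{v}$ shared by the two tight-constraint systems.
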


\begin{proof}
Since $a_j = 0$ for all $j > k$, we simply need to show that $\sum\limits_{j=i}^k kja_j \geq k-i+1$ for each $2 \leq i \leq k$ and $\sum\limits_{j=1}^k ja_j = 1$. However, since $kja_j = 1$, it is easy to see that $a_j$ are a feasible solution to equation \ref{old-primal}.

In fact, these $a_j$ are the solution given by making all constraints tight, and thus if we let $\vec{a} \in \mathbb{R}^k$ be the vector of the non-zero $a_j$, we see that $M\vec{a} = \vec{v}$ where $\vec{v}$ is the vector given by $v_i = k-i-1$ and $M$ is the matrix with $M_{ij} = kj$ when $j \geq i$ and 0 otherwise.

This means that $\vec{a} = M^{-1}\vec{v}$ and so if we let $\vec{c}$ be given by $c_i = \ln\left(1+ \frac{i}{k-1}\right)$, we have that $\alpha = \vec{c}^\top \vec{a} = \vec{c}^\top M^{-1}\vec{v}$.

As for the dual solution, we can use Jensen to verify that $b_i \leq 0$ for every $2 \leq i \leq k$. We can also see that the first $k$ constraints are actually tight. To verify that the remaining constraints hold, simply use that Jensen gives that $\frac{\ln\left(1+ \frac{j}{k-1}\right)}{j}$ is a decreasing sequence.

This means that these $b_j$ are a feasible solution, and furthermore the one given by making the first $k$ constraints tight. Thus if we let $\vec{b} \in \mathbb{R}^k$ be the vector of the $b_i$, we have that $M^\top \vec{b} = \vec{c}$. This means that $\vec{b} = (M^\top)^{-1} \vec{c}$, and thus $\beta = \vec{v}^\top\vec{b} = \vec{v}^\top(M^{-1})^\top \vec{c}$.

This means that $\alpha = \beta$, and thus by duality these sets of $a_j$ and $b_i$ are optimal solutions to their respective problems.
\end{proof}

This allows us to finally prove Theorem \ref{old-main} by combining Lemma \ref{old-proof} with Lemma \ref{old-calculation}:

\begin{proof}
By Lemma \ref{old-proof}, we have that $M(n,k)^2 \leq \gamma_k^n$, so $M(n,k) \leq c_k^n$ where $c_k = \sqrt{\gamma}$. By Lemma \ref{old-calculation}, we can write $\gamma$ and $c_k$ as follows:

$$\gamma = (k-1)e^{\left(\sum\limits_{j=1}^{k} \frac{\ln\left(1+ \frac{j}{k-1}\right)}{jk}\right)}$$
$$c_k = \sqrt{k-1}e^\frac{\sum\limits_{j=1}^{k} \frac{\ln\left(1+ \frac{j}{k-1}\right)}{j}}{2k}$$

Using $x = \frac{j}{k-1}$, we can manipulate a part of this equation to resemble a Riemann approximation of the integral $\int\limits_0^1 \frac{\ln(1+x)}{x} = \frac{\pi^2}{12}$. This gives that:

\begin{align*}
\sum\limits_{j=1}^{k} \frac{\ln\left(1+ \frac{j}{k-1}\right)}{j} &=  \frac{\ln\left(1+ \frac{k}{k-1}\right)}{k} + \frac{\sum\limits_{j=1}^{k-1} \frac{\ln\left(1+ \frac{j}{k-1}\right)}{\frac{j}{k-1}}}{k-1}\\
&= \int\limits_0^1 \frac{\ln(1+x)}{x} + O(k^{-1})\\
&= \frac{\pi^2}{12} + O(k^{-1})
\end{align*}

Since $e^{\frac{\pi^2}{12 k} + O(k^{-2})} = 1 + \frac{\pi^2}{12 k} +  O(k^{-2})$, we have that $\frac{\gamma_k}{k} = 1 - \frac{1 - \frac{\pi^2}{12}}{k} + O(k^{-2})$. Since $\gamma_k= c_k^2$, we can use the Taylor series decomposition of $\sqrt{1+x}$ to see that $\frac{c_k}{\sqrt{k}} = \sqrt{1 - \frac{t_2}{k} + O(k^{-2})} = 1 - \frac{t_2}{2k} + O(k^{-2})$.
\end{proof}

The fact that $t_2 > t_1$ implies that $c_k \leq c_{q,k}$ for large enough $q$, but we can actually prove more:

\begin{theorem}
Any decomposition bound that decomposes with this greedy approach with restricted block sizes gives a bound that is no better than this greedy algorithm. In particular, $c_{q,k} \geq c_k$ for any $q \leq k$.
\end{theorem}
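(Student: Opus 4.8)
The plan is to exhibit an explicit feasible solution to the primal LP of Lemma~\ref{old-proof} whose objective value recovers Scheinerman's bound $c_{q,k}$, and then invoke LP optimality. The key observation is that Scheinerman's earlier theorem (with parameter $q$) is itself obtained from exactly the same decomposition philosophy — partitioning rows into blocks, bounding each block's volume by Olkin's inequality — except that the block sizes are artificially restricted to lie in $\{1, q\}$ (or, more precisely, the analysis there only uses blocks of one fixed large size $q$ together with leftover singleton rows). Since the greedy-algorithm LP in Lemma~\ref{old-proof} maximizes the objective $\alpha$ over \emph{all} partitions satisfying the greedy constraints, and its optimum $\alpha$ yields $\gamma_k = (k-1)e^\alpha$ with $c_k = \sqrt{\gamma_k}$, it suffices to show that the particular $a_j$ arising from the $q$-restricted scheme is feasible for \eqref{old-primal} and achieves objective value $2\ln(c_{q,k}/\sqrt{k-1})$.

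Concretely, I would first recompute $c_{q,k}$ from the decomposition: a matrix in $R(n,k)$ decomposed with blocks of size $q$ (and singletons for the remainder) gives, via $Vol(A_i)^2 \leq (m_i + k - 1)(k-1)^{m_i - 1}$, a product of the form $(q+k-1)^{x_q}(k-1)^{(q-1)x_q} \cdot k^{x_1}$ with $q x_q + x_1 = n$; matching the exponents in the displayed formula for $c_{q,k}$ pins down $x_q/n$ and $x_1/n$, hence the candidate $a_q$ and $a_1$. Second, I would check that this $(a_1, 0, \dots, 0, a_q, 0, \dots, 0)$ satisfies the greedy constraints $\sum_{j \geq i} kj a_j \geq k - i + 1$ for $i = 2, \dots, k$ and the equality $\sum_j j a_j = 1$ — the equality is just $q a_q + a_1 = 1$, and the inequalities reduce to checking, for each $i$, whether the surviving mass (either $kq a_q$ when $i \le q$, or $0$ when $i > q$) dominates $k-i+1$; this is exactly the condition that makes the $q$-block decomposition a \emph{valid} greedy-type partition, so it should hold by construction, though one may need $q \le k$ here, which is the stated hypothesis. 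Third, plugging this feasible point into the objective gives a lower bound $\alpha \geq a_1 \ln(1 + \tfrac{1}{k-1}) + a_q \ln(1 + \tfrac{q}{k-1})$, which I would simplify to show equals $2\ln(c_{q,k}) - \ln(k-1)$; since $\alpha$ is the \emph{maximum}, we get $\gamma_k \geq c_{q,k}^2$, i.e. $c_k \geq c_{q,k}$ — wait, this gives the inequality in the direction we want, since the theorem claims $c_{q,k} \ge c_k$, so I must be careful: the greedy LP is a \emph{maximization} that upper-bounds $Vol(A)^2$, and a larger feasible objective means a \emph{weaker} (larger) bound, so $\gamma_k = (k-1)e^{\alpha_{\max}} \ge (k-1)e^{\alpha_{\text{candidate}}} = c_{q,k}^2$, hence $c_k = \sqrt{\gamma_k} \ge c_{q,k}$, contradicting the claim; the resolution is that the $q$-restricted decomposition does \emph{not} in general satisfy the greedy constraints (greedy always takes the \emph{largest} available block, forcing more structure), so instead the correct comparison runs the other way: the candidate must be shown \emph{infeasible unless} it coincides with greedy, or rather, one shows $c_{q,k}$ is an upper bound obtained by a \emph{more restrictive} LP (fewer allowed partitions, hence only the constraint structure with block sizes in $\{1,q\}$), whose optimum is necessarily at least the true greedy optimum $c_k$.

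So the honest plan: set up the $q$-restricted bound as its own optimization — maximize the per-row volume exponent over decompositions using only block sizes dividing into a clean $\{1, q\}$ pattern — and observe that \emph{every} matrix's greedy decomposition, when its block-size vector is fed through the coarser $\{1,q\}$ accounting (replacing each block of size $j$ by the bound one would get treating it less efficiently), yields a value bounded by $c_{q,k}^n$; alternatively, show directly that the feasible region of the greedy LP is \emph{contained} in that of the $q$-restricted LP after the appropriate change of variables, so the greedy maximum is at most the $q$-restricted maximum. The main obstacle is getting this containment/monotonicity statement precisely right: one must argue that restricting block sizes genuinely shrinks the class of achievable decomposition bounds \emph{from below on the exponent}, i.e. that no clever choice of small blocks in the greedy scheme can outperform the $c_{q,k}$ value — this is where the hypothesis $q \le k$ and the concavity (Jensen) facts already used in Lemma~\ref{old-calculation} should enter, showing that the greedy LP optimum $a_j = \tfrac{1}{kj}$ spreads mass across all sizes $1, \dots, k$ in the unique way that \emph{minimizes} the resulting $\gamma_k$, so any coarser scheme can only do worse. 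I expect verifying this minimality — that the full greedy LP genuinely beats every restricted variant, rather than merely not losing to it — to be the crux, and it should follow by a direct comparison of the explicit optimal objective from Lemma~\ref{old-calculation} against the closed form of $c_{q,k}$ in Theorem~1.1, term by term.
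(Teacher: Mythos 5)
There is a genuine gap: the one claim that carries the whole theorem --- that restricting the allowed block sizes can only \emph{increase} the LP optimum and hence weaken the resulting bound --- is precisely the step you flag as ``the crux'' and leave unproven. The first, detailed half of your write-up in fact argues the reverse inequality, by treating the $\{1,q\}$ block profile as a feasible point of \ref{old-primal}; your retraction is correct, and for a concrete reason: the optimal $\{1,q\}$ profile has $kqa_q = k-q+1$, which violates the constraints of \ref{old-primal} for $2 \le i < q$, so it is not greedy-feasible. But the fallback you then propose --- that the feasible region of the greedy LP is contained in that of a $q$-restricted LP ``after the appropriate change of variables'' --- also does not work as stated: when block size $i$ is disallowed one loses both the variable $a_i$ \emph{and} the structural constraint indexed by $i$ (the restricted algorithm no longer guarantees that no column has $i$ ones at the corresponding stage), so the two programs have different variables and different constraint sets, and neither primal feasible region contains the other. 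The paper closes exactly this gap on the dual side: deleting $a_i$ and the $i$th constraint corresponds in the dual to forcing $b_i=0$ (and dropping the $j=i$ constraint), the resulting feasible set sits inside that of \ref{old-dual}, and so by LP duality the restricted optimum is at least $\beta=\alpha$; since Scheinerman's $c_{q,k}$ is the variant with allowed sizes $\{1,q\}$ only, $c_{q,k}\ge c_k$ follows. Your proposal never supplies this (or any) mechanism for the comparison.

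For what it is worth, your closing ``term-by-term'' suggestion can be completed for the specific inequality $c_{q,k}\ge c_k$: by Lemma \ref{old-calculation}, $\alpha=\frac{1}{k}\sum_{j=1}^{k}\frac{\ln\left(1+\frac{j}{k-1}\right)}{j}$, and since $\ln\left(1+\frac{j}{k-1}\right)/j$ is decreasing in $j$, each term with $j<q$ is at most $\ln\left(1+\frac{1}{k-1}\right)$ and each term with $j\ge q$ is at most $\frac{1}{q}\ln\left(1+\frac{q}{k-1}\right)$, so $\alpha \le \frac{q-1}{k}\ln\left(1+\frac{1}{k-1}\right)+\frac{k-q+1}{kq}\ln\left(1+\frac{q}{k-1}\right)=\ln\left(c_{q,k}^2/(k-1)\right)$, i.e.\ $c_k\le c_{q,k}$. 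But you only conjecture this computation rather than carry it out, and it would establish only the ``in particular'' clause, not the theorem's general assertion about arbitrary restricted-block-size variants, which is what the paper's duality argument delivers.
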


\begin{proof}
If block size $i$ is not allowed, then the LP corresponding to Equation \ref{old-primal} for that algorithm has $a_i = 0$ and the inequality corresponding to $i$ is omitted. Taking the dual of this new LP gives an LP whose feasible region is contained in the feasible region of Equation \ref{old-dual}, except we force that $b_i = 0$.

This means that the optimal solution of the modified primal problem is the same value as a feasible dual solution, and thus is at least as large as $\alpha$. This means that the induced bound on $M(n,k)^2$ is larger with the modified algorithm.

Scheinerman's methods \cite{Scheinerman:2019aa} for $c_{q,k}$ are simply modified greedy algorithms, where only blocks of size 1 and $q$  are used (only $a_1$ and $a_q$ are non-zero). Therefore the unmodified greedy algorithm bound $c_k \leq c_{q,k}$ for any $k$ and $q$.

\end{proof}
\subsection{Improving the greedy algorithm}

To attempt to do better, we try to utilize the idea that we can take all the rows with a one in the chosen column, meaning that column cannot be chosen in future iterations. In fact, since the remainder of the chosen column is guaranteed to have no ones, we can remove that column for all future iterations of our algorithm without affecting the determinant.

This means that after we have removed all blocks of size at least $i$, there are still $m = n - \sum_{j=i}^n jx_j$ rows, but now only $n - \sum_{j=i}^n x_j$ columns, meaning that we can get more large blocks. This means that the average number of ones per row is $k\frac{n - \sum_{j=i}^n jx_j}{n - \sum_{j=i}^n x_j}$ which again must be no larger than $i-1$. Rearranging this gives us the constraints in the following linear program:

\begin{lemma}\label{new-proof}
For any fixed $k$, let $\alpha$ be the optimal solution of the following LP:

\begin{equation}\label{new-primal}
\begin{array}{ll@{}ll}
\text{maximize}  & \alpha' = \sum\limits_{j=1}^{n} \ln\left(1+ \frac{j}{k-1}\right)a_j' &\\
\text{subject to}& \sum\limits_{j=i}^n (kj-i+1)a_j' \geq k-i+1, \hspace{.5cm}& i=2 ,..., k\\
& \sum\limits_{j=1}^n ja_j' = 1\\
& a_{j} \geq 0 ,& j=1 ,..., n
\end{array}
\end{equation}
Then $M(n,k)^2 \leq (\gamma_k')^{n}$ where $\gamma_k' = (k-1)e^{\alpha'}$.
\end{lemma}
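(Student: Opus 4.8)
The plan is to follow the proof of Lemma~\ref{old-proof} almost verbatim, replacing only the counting step so as to exploit the column deletions. Fix $A \in R(n,k)$ and run the refined greedy algorithm: at each step choose a column with the most ones, let its $1$-rows form the next block $A_\ell$, and then delete both those rows and that column before proceeding. Write $x_j$ for the number of blocks of size $j$; since the blocks partition the $n$ rows, $\sum_j j x_j = n$ and each $x_j$ is a nonnegative integer.

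First I would record two structural facts. (i) The block sizes produced are non-increasing: deleting rows can only decrease column sums and deleting a column does not affect the other column sums, so the maximum column sum never increases from one step to the next. (ii) A column deleted at some step is identically zero on every row surviving that step, because the block just removed contained \emph{all} rows with a $1$ in that column; hence every surviving row retains row sum exactly $k$ for the whole run, and in particular each Gram matrix $A_\ell A_\ell^\top$ has diagonal entries $k$ and strictly positive off-diagonal entries (the chosen column of step $\ell$ is a common $1$ of all rows of $A_\ell$). Olkin's bound therefore gives $Vol(A_\ell)^2 \le (m_\ell + k - 1)(k-1)^{m_\ell - 1}$ as in the original argument, and submultiplicativity of $Vol(\cdot)^2$ across row blocks (Fischer's inequality) yields $Vol(A)^2 \le (k-1)^n \prod_{j}\bigl(1 + \tfrac{j}{k-1}\bigr)^{x_j}$. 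So the column deletions are free of charge for the determinant estimate; their only role is to tighten the counting.

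The new constraint is the heart of the matter. Fix $2 \le i \le k$ and consider the algorithm at the moment just after the last block of size $\ge i$ has been removed, a well-defined prefix by fact~(i). At that point $m := n - \sum_{j \ge i} j x_j$ rows remain and $n - \sum_{j \ge i} x_j$ columns remain (one column deleted per block); by fact~(ii) every remaining row still sums to $k$; and no remaining column contains $i$ or more ones, since otherwise greedy would next produce another block of size $\ge i$. Double counting the ones of the remaining submatrix by rows and by columns gives $km \le (i-1)\bigl(n - \sum_{j \ge i} x_j\bigr)$, which rearranges to $\sum_{j \ge i}(kj - i + 1)\, x_j \ge (k - i + 1)\, n$. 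Dividing this inequality — and $\sum_j j x_j = n$ and $x_j \ge 0$ — by $n$ and putting $a_j' := x_j/n$ shows that $(a_j')$ is a feasible point of the linear program in equation~\ref{new-primal}; exactly as in Lemma~\ref{old-proof}, the would-be constraints for $i = 1$ and $i > k$ are automatically satisfied and are simply omitted, and relaxing the integrality of the $x_j$ only enlarges the feasible set.

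Finally I would pass to logarithms. Dividing the volume bound by $(k-1)^n$ and taking $\ln$ gives $\ln\bigl(Vol(A)^2/(k-1)^n\bigr) = n \sum_j a_j' \ln\bigl(1 + \tfrac{j}{k-1}\bigr) \le n\alpha'$, since $(a_j')$ is feasible for equation~\ref{new-primal} and $\alpha'$ is its optimal value. Hence $Vol(A)^2 \le (k-1)^n e^{n\alpha'} = (\gamma_k')^n$, and since $A \in R(n,k)$ was arbitrary and $M(n,k)^2 = \max_A Vol(A)^2$, we conclude $M(n,k)^2 \le (\gamma_k')^n$. The only genuinely new ingredient — and the step I would treat most carefully — is this counting argument: one must verify that a deleted column is all-zero on the surviving rows, so that the row sums stay equal to $k$ (keeping the per-block Olkin estimate valid) and the count of surviving columns drops by exactly one per removed block, and that the block sizes are non-increasing, so that ``after all blocks of size $\ge i$ have been removed'' genuinely describes a prefix of the run at which the maximum column sum is at most $i-1$. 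Everything else is word-for-word the proof of Lemma~\ref{old-proof}.
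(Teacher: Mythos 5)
Your proof is correct and follows essentially the same route as the paper: run the column-deleting greedy algorithm, keep the Fischer/Olkin per-block volume estimate unchanged, and use the double-counting of ones over the surviving rows and columns to derive the constraints $\sum_{j\ge i}(kj-i+1)x_j \ge (k-i+1)n$, then scale by $n$ and relax. You in fact make explicit two points the paper leaves implicit (that block sizes are non-increasing so ``after all blocks of size $\ge i$'' is a prefix of the run, and that a deleted column is zero on all surviving rows so row sums stay $k$), which is a welcome tightening rather than a deviation.
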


We can also find the dual in the same manner as before:

\begin{equation}\label{new-dual}
\begin{array}{ll@{}ll}
\text{minimize}  & \beta' = \sum\limits_{i=1}^{k} (k-i+1)b_i' &\\
\text{subject to}& \sum\limits_{i=1}^j (kj-i+1) b_i' \geq \ln\left(1+ \frac{j}{k-1}\right), \hspace{.5cm}& j=1,..., n\\
& b_i' \leq 0 ,&i=2 ,..., k
\end{array}
\end{equation}

While the analog of Lemma \ref{old-calculation} holds again, the proof is a little more involved:

\begin{lemma}\label{new-calculation}
The optimal solution to equation \ref{new-primal} is given by $a_j' = 0$ for $j > k$ and the other $a_j'$ determined by making all $k$ non-trivial constraints tight. The optimal solution to equation \ref{new-dual} is given by the $b_i'$ that make the first $k$ constraints tight.
\end{lemma}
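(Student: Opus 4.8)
The plan is to mirror the structure of the proof of Lemma \ref{old-calculation}, reducing the optimality of the proposed primal and dual solutions to a single verification of feasibility plus an invocation of LP duality. First I would set up notation analogous to the old case: let $M'$ be the $k\times k$ matrix with $M'_{ij} = kj - i + 1$ for $j \geq i$ and $0$ otherwise, let $\vec v$ be the vector with $v_i = k - i + 1$, and let $\vec c$ be the vector with $c_j = \ln(1 + \frac{j}{k-1})$. The claim that the primal optimum is attained by zeroing $a'_j$ for $j > k$ and making all $k$ nontrivial constraints tight amounts to saying $M'\vec a' = \vec v$ has a solution with $\vec a' \geq 0$; dually, making the first $k$ dual constraints tight means $(M')^\top \vec b' = \vec c$, and we must check $\vec b' \leq 0$ componentwise (for $i \geq 2$) and that the remaining dual constraints ($j > k$) hold. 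Once both are shown feasible, the fact that they are obtained from the same square system forces $\alpha' = (\vec a')^\top \vec c = (\vec a')^\top (M')^\top \vec b' = \vec v^\top \vec b' = \beta'$, so weak duality upgrades to optimality for both.

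The substantive work, and the main obstacle, is verifying the sign and monotonicity conditions, since unlike the old LP we no longer have the clean closed form $kja_j = 1$. I would first argue $\vec a' \geq 0$: writing the tight constraints as a telescoping-type recursion, the difference between consecutive tight constraints expresses $(kj - j + 1)a'_j$ — or a similar combination — in terms of lower-index data, and I expect each such expression to come out nonnegative because the coefficients $kj - i + 1$ are increasing in $j$ and the right-hand sides $k - i + 1$ are decreasing in $i$. For the dual, solving $(M')^\top \vec b' = \vec c$ by back-substitution (the transpose is lower-triangular read in the other direction) should give $b'_i$ as a difference of terms of the form $\frac{\ln(1 + j/(k-1))}{\text{something increasing in } j}$, and the needed inequality $b'_i \leq 0$ should again follow from the fact that $j \mapsto \frac{1}{j}\ln(1 + \frac{j}{k-1})$ is decreasing, exactly the Jensen-type monotonicity used in Lemma \ref{old-calculation}, possibly after absorbing the extra $-i+1$ term into the estimate.

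Finally I would handle the omitted primal and dual constraints. For the primal, all constraints with $i > k$ are trivially satisfied because every $a'_j \geq 0$ and their right-hand sides $k - i + 1$ are nonpositive; the equality constraint is implied by the $i = 1$ tight constraint just as before, which is why only $2 \le i \le k$ appear. For the dual constraints with $j > k$, I would show $\sum_{i=1}^{k}(kj - i + 1)b'_i \geq \ln(1 + \frac{j}{k-1})$ by comparing with the $j = k$ constraint (which is tight): since the $b'_i$ for $i \geq 2$ are nonpositive and the coefficient $kj - i + 1$ grows with $j$, the left side decreases in $j$ more slowly than — or the whole comparison reduces to — the decreasing behavior of $\frac{1}{j}\ln(1 + \frac{j}{k-1})$, which bounds the right side. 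Assembling these pieces gives feasibility of both proposed solutions, and the matched objective values then complete the proof exactly as in Lemma \ref{old-calculation}.
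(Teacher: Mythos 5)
Your overall frame --- set up $M'$, $\vec v$, $\vec c$, verify feasibility of the primal solution $M'\vec a' = \vec v$ with $\vec a' \geq 0$ and of the dual solution $(M')^\top \vec b' = \vec c$ with $b_i' \leq 0$ plus the $j > k$ constraints, then get $\alpha' = \beta'$ and optimality from duality --- is exactly the paper's, and your primal sketch is essentially salvageable: differencing consecutive tight constraints gives $((k-1)i+1)a_i' + \sum_{j=i+1}^k a_j' = 1$, i.e. $a_i' = \frac{1 - \sum_{j>i} a_j'}{(k-1)i+1}$, and a downward induction showing the partial sums $\sum_{j>i} a_j'$ never exceed $1$ yields nonnegativity (your stated reason, coefficients increasing in $j$ and right sides decreasing in $i$, is not the operative one, but the recursion you envision does work and is what the paper uses).

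The genuine gap is in the dual feasibility, which is the heart of this lemma. Back-substitution in $(M')^\top \vec b' = \vec c$ does not produce $b_j'$ as a clean difference of terms of the form $\ln\left(1+\frac{j}{k-1}\right)$ divided by something increasing in $j$: subtracting consecutive tight rows gives $((k-1)j+1)b_j' + k\sum_{i=1}^{j-1} b_i' = \ln\left(1+\frac{j}{k-1}\right) - \ln\left(1+\frac{j-1}{k-1}\right)$, so each $b_j'$ is entangled with the full sum of all earlier dual variables, and its sign cannot be read off from the monotonicity of $\frac{1}{j}\ln\left(1+\frac{j}{k-1}\right)$ ``after absorbing the extra $-i+1$ term'' --- that extra term is precisely the difficulty. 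The paper instead takes the weighted combination of $(j-1)$ times row $j$ minus $j$ times row $j-1$, which isolates $b_j'$ up to a correction $\sum_{i=1}^{j-1}(i-1)b_i'$, and then runs an induction maintaining the auxiliary inequality $\sum_{i=1}^{j-1}(i-1)b_i' \geq (j-1)\ln\left(1+\frac{j}{k-1}\right) - j\ln\left(1+\frac{j-1}{k-1}\right)$, using a Jensen estimate to show the right-hand side is decreasing in $j$; this simultaneously gives $b_j' \leq 0$ and, run upward from the tight $j=k$ constraint, the constraints for $j > k$. Your sketch for $j > k$ (``the left side decreases in $j$ more slowly than\dots'') is likewise not yet an argument: the left side is affine in $j$ with slope $k\sum_i b_i'$, whose sign you have not established, so the comparison with the concave right side needs exactly the kind of inductive bookkeeping the paper performs. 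Without some version of this auxiliary induction your proposal does not close.
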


\begin{proof}
To show feasibility of the primal solution, we need to show that each $a_j$ is positive, as the other constraints are already tight. This is trivial for each $j > k$, so we focus on the case where $j \leq k$.

To do this, we note that $\sum_{j=i}^k (kj-i+1) a_j' = k-i+1$ and $\sum_{j=i+1}^k (kj-i) a_j' = k-i$. When solving for $a_i'$, we get the recursive definition: $a_i' = \frac{1-\sum_{j=i+1}^k a_j'}{(k-1)i+1}$. The exact values can be computed from this, but this is unnecessary to show that $a_i' \geq 0$.

We can simply note that this means that each $a_i'$ is a small fraction of the distance between $\sum_{j=i+1}^k a_j'$ and $1$. By induction down from $k$, this means that every $\sum_{j=i+1}^k a_j' \leq n$, and thus $a_j' \geq 0$ for every $j$.

In addition, we can write the non-zero $a_j'$ in the same manner as in Lemma \ref{old-calculation}, so $\vec{a'} = (M')^{-1}\vec{v}$ and $\alpha = \vec{c}^\top (M')^{-1}\vec{v}$, where all quantities are the same as in Lemma \ref{old-calculation} except that $M'_{ij} = kj - i +1$ for $j \geq i$ instead.

To show feasibility of the dual solution, we need to show that $b_i' \leq 0$ for each $i \geq 2$ and also that $\sum\limits_{i=1}^j (kj-i+1) b_i' \geq \ln\left(1+ \frac{j}{k-1}\right)$ for each $j > k$, as the other constraints are already tight.

To show both of these things, we rely heavily on the following equation:
\begin{equation}\label{proof-helper}
(j-1)\sum_{i=1}^{j} (kj - i + 1)b_i' - j\sum_{i=1}^{j-1} (k(j-1) - i + 1)b_i' = (j-1)[(k-1)(j-2) +1]b_j' + \sum_{i=1}^{j-1} (i-1) b_i'
\end{equation}

To show that $b_j' \leq 0$ for $2 \leq j \leq k$, we simply notice that $\sum_{i=1}^{j} (kj - i + 1)b_i' = \ln\left(1+ \frac{j}{k-1}\right)$ and $\sum_{i=1}^{j-1} (k(j-1) - i + 1)b_i' = \ln\left(1+ \frac{j-1}{k-1}\right)$ because $ j\leq k$. This means that the left hand side is equal to $(j-1)\ln\left(1+ \frac{j}{k-1}\right) - j\ln\left(1+ \frac{j-1}{k-1}\right)$, so in order to show $b_j'$ is negative, we simply need to show that $ \sum_{i=1}^{j-1} (i-1) b_i' \geq (j-1)\ln\left(1+ \frac{j}{k-1}\right) - j\ln\left(1+ \frac{j-1}{k-1}\right)$ for each $j \geq 2$.

We show this by induction on $j$. The base case is when $j = 2$, where the left hand side is 0. A simple application of Jensen's inequality gives that the right hand side is negative, and thus $b_2' \leq 0$.

Now inductively assume that $\sum_{i=1}^{j-1} (i-1) b_i' \geq (j-1)\ln\left(1+ \frac{j}{k-1}\right) - j\ln\left(1+ \frac{j-1}{k-1}\right)$, and thus that $b_j' \leq 0$. Using Jensen, we have that:
\begin{multline}\label{proof-helper2}
\left[j\ln\left(1 + \frac{j+1}{k-1}\right) - (j+1)\ln\left(1 + \frac{j}{k-1}\right)\right] - \left[(j+1)\ln\left(1 + \frac{j+2}{k-1}\right) - (j+2)\ln\left(1 + \frac{j+1}{k-1}\right)\right]\\
= (j+1)[2\ln\left(1 + \frac{j+1}{k-1}\right) - \ln\left(1 + \frac{j}{k-1}\right) - \ln\left(1 + \frac{j+2}{k-1}\right)] > 0
\end{multline}

This means $(j-1)[(k-1)(j-2) +1]b_{j}' \leq (j-1)b_j'$, so using our constraints we get that $\sum_{i=1}^{j} (i-1) b_i' \geq (j-1)[(k-1)(j-2) +1]b_{j}' + \sum_{i=1}^{j-1} (i-1) b_i'$. By equation \ref{proof-helper2}, we have that $j\ln\left(1 + \frac{j+1}{k-1}\right) - (j+1)\ln\left(1 + \frac{j}{k-1}\right) \geq (j-1)\ln\left(1+ \frac{j}{k-1}\right) - j\ln\left(1+ \frac{j-1}{k-1}\right)$, so we have our induction.

This gives that $b_j' \leq 0$ for $j \geq 2$. To do the other constraints, we simply leverage equations \ref{proof-helper} and \ref{proof-helper2} differently.

We prove that $\sum_{i=1}^j (kj - i + 1)b_i' \geq \ln\left(1 + \frac{j}{k-1}\right)$ inductively, where the base case is the given equality when $j = k$. Then, assuming truth for $j$, the inductive step is simply:
\begin{align*}
\sum_{i=1}^{j+1} (k(j+1) - i + 1)b_i' &= \frac{j[(k-1)(j-1) +1]b_{j+1}' + \sum_{i=1}^j (i-1) b_i' + (j+1) \sum_{i=1}^{j} (kj - i + 1)b_i'}{j}\\
&\geq \frac{ j\ln\left(1 + \frac{j+1}{k-1}\right) - (j+1)\ln\left(1 + \frac{j}{k-1}\right) +  (j+1)\ln\left(1 + \frac{j}{k-1}\right)}{j}\\
& =  \ln\left(1 + \frac{j+1}{k-1}\right)
\end{align*}

This means that the $b_i'$ are a feasible solution to the dual, and so like in Lemma \ref{old-calculation}, we have $\vec{b'} = (M'^\top)^{-1} \vec{c}$ and $\beta' = \vec{v}^\top\vec{b'} = \vec{v}^\top((M')^{-1})^\top \vec{c}$. Since $\alpha' = \beta'$, both are optimal.
\end{proof}

As in Theorem \ref{old-main}, we now can say

\begin{theorem}\label{new-main}
For every $k$, there is a $c_k'$ such that $M(n,k) \leq c_k'$ where $c_k' = \sqrt{k} - \frac{t_3}{2\sqrt{k}} + O(k^\frac{-3}{2})$ for $t_3 \approx 0.178$.
\end{theorem}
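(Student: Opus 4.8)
The plan is to follow the proof of Theorem~\ref{old-main} line for line, with Lemmas~\ref{new-proof} and \ref{new-calculation} taking the roles of Lemmas~\ref{old-proof} and \ref{old-calculation}. Lemma~\ref{new-proof} gives $M(n,k)^2 \le (\gamma_k')^n$ with $\gamma_k' = (k-1)e^{\alpha'}$, so writing $\frac{\gamma_k'}{k} = \left(1-\frac1k\right)e^{\alpha'}$ and using the Taylor expansions of $e^x$ and $\sqrt{1+x}$ exactly as before, the whole statement reduces to the single asymptotic estimate $\alpha' = \frac{\pi^2}{12\,k} + O(k^{-2})$; this then yields $t_3 = 1 - \frac{\pi^2}{12} = t_2 \approx 0.178$ and $c_k' = \sqrt{\gamma_k'} = \sqrt{k} - \frac{t_3}{2\sqrt{k}} + O(k^{-3/2})$.

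To estimate $\alpha'$ I would first turn the recursion from the proof of Lemma~\ref{new-calculation} into a closed form. Setting $P_i = 1 - \sum_{j=i}^k a_j'$, the recursion $a_i' = \frac{P_{i+1}}{(k-1)i+1}$ becomes $P_i = \frac{(k-1)i}{(k-1)i+1}P_{i+1}$ with $P_{k+1}=1$, so $P_i = \prod_{j=i}^k \frac{(k-1)j}{(k-1)j+1}$ and hence
\[
a_i' = \frac{1}{(k-1)i+1}\prod_{j=i+1}^{k}\frac{(k-1)j}{(k-1)j+1},\qquad 1\le i\le k.
\]
Each product is $1 - O\!\left(\frac{\ln k}{k}\right)$ uniformly in $i$ (take logarithms and compare $\sum_{j=i+1}^{k}\frac{1}{(k-1)j}$ with $\frac{1}{k-1}\ln\frac{k}{i}$), and $\frac{1}{(k-1)i+1} = \frac{1}{ki}\bigl(1+O(k^{-1})\bigr)$, so $a_i' = \frac{1}{ki}\bigl(1+o(1)\bigr)$ uniformly. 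Substituting into $\alpha' = \sum_{i=1}^k \ln\!\left(1+\frac{i}{k-1}\right)a_i'$, the leading term is $\frac1k\sum_{i=1}^k \frac{\ln(1+i/(k-1))}{i}$, which is exactly the Riemann sum for $\int_0^1 \frac{\ln(1+x)}{x}\,dx = \frac{\pi^2}{12}$ already handled in the proof of Theorem~\ref{old-main}.

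The one genuinely nontrivial point is controlling the correction term: the theorem asserts error $O(k^{-3/2})$ in $c_k'$, which requires the error in $\alpha'$ to be $O(k^{-2})$, not merely $O(k^{-2}\ln k)$. The $\frac1{k-1}\ln\frac{k}{i}$ perturbation in $a_i'$ is large only for small $i$, but there the weight $\frac{\ln(1+i/(k-1))}{ki}$ is itself of order $k^{-2}$, so those terms are harmless; making this rigorous needs either a careful split of the sum by the size of $i$, or — cleaner — a direct comparison with the old LP via $M' = M - N$, where $N_{ij}=i-1$ for $j\ge i$ and $0$ otherwise, and an estimate of $\alpha - \alpha' = \vec c^{\top}\!\bigl(M^{-1}-(M')^{-1}\bigr)\vec v$ as a convergent sum of size $O(k^{-2})$. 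I expect this error bound to be the main obstacle; everything else is bookkeeping identical to Section~2.1.

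Finally, with $\alpha' = \frac{\pi^2}{12k}+O(k^{-2})$ in hand, $\frac{\gamma_k'}{k} = \bigl(1-\frac1k\bigr)\bigl(1+\frac{\pi^2}{12k}+O(k^{-2})\bigr) = 1 - \frac{1-\pi^2/12}{k}+O(k^{-2})$, and taking square roots gives Theorem~\ref{new-main} with $t_3 = 1 - \frac{\pi^2}{12}$. It is also worth recording, and is immediate from the constraint comparison at the end of Section~2.1, that the new LP's feasible region is contained in the old one, so $\alpha'\le\alpha$ and $c_k'\le c_k$ for every $k$; the content of the asymptotic statement is that this improvement is of lower order and leaves the coefficient of $k^{-1/2}$ in the correction unchanged.
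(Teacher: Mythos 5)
Your proposal is correct, but it actually does more than the paper's own proof, which is a two-line argument: the paper simply combines Lemma~\ref{new-proof} and Lemma~\ref{new-calculation} to get $M(n,k)\le (c_k')^n$ with $c_k'=\sqrt{(k-1)e^{\alpha'}}$, and the value $t_3\approx 0.178$ is left as an experimental observation (the remark after the theorem, and the footnote to Table~\ref{table-1}, explicitly treat the constant as conjectured; the later strict-improvement theorem only gives $\alpha'<\alpha$, i.e.\ the upper side $c_k'\le c_k$, and the matching estimate $\alpha-\alpha'=O(k^{-2})$ is only checked numerically). You supply exactly the missing ingredient: solving the recursion $a_i'=\frac{1-\sum_{j>i}a_j'}{(k-1)i+1}$ in closed form as $a_i'=\frac{1}{(k-1)i+1}\prod_{j=i+1}^{k}\frac{(k-1)j}{(k-1)j+1}$, which is a correct telescoping of $P_i=1-\sum_{j\ge i}a_j'$, and then estimating $\alpha'=\frac{\pi^2}{12k}+O(k^{-2})$. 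The error control you flag as the main obstacle does close along the lines you suggest: the perturbation of $a_i'$ from $\frac{1}{ki}$ is $O\bigl(\frac{1+\ln(k/i)}{k^2 i}\bigr)$, and since the weight satisfies $\ln\bigl(1+\frac{i}{k-1}\bigr)\le\frac{i}{k-1}$, the factor $i$ cancels the $\frac1i$ and $\sum_{i\le k}\bigl(1+\ln(k/i)\bigr)=O(k)$ gives a total correction $O(k^{-2})$, with no stray logarithm; your containment remark $\alpha'\le\alpha$ is also correct since $kj-i+1\le kj$. So where the paper proves only the bound $M(n,k)\le(c_k')^n$ and conjectures the expansion, your route (once the sketched estimates are written out) rigorously establishes $t_3=t_2=1-\frac{\pi^2}{12}$, strengthening the paper's statement rather than merely reproving it.
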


\begin{proof}
Using Lemma \ref{new-proof} and Lemma \ref{new-calculation}, we have that $M(n,k) \leq (c_k')^n$ for $c_k' = \sqrt{(k-1)e^{\alpha'}}$.
\end{proof}

Experimentally $t_3 \approx 0.178$, but the recursive nature of the computations for finding the $a_i$ (shown in the proof of Lemma \ref{new-calculation}) makes explicit calculation of $t_3$ difficult.
While this seems identical to Theorem \ref{old-main}, we can prove that this new $c_k'$ is a strict improvement on the $c_k$ from the previous analysis.

\begin{theorem}
The optimal solution $\alpha'$ to equation \ref{new-primal} is strictly smaller than $\alpha$ in equation \ref{old-primal}. Therefore, the improved algorithm gives a strictly better bound on $M(n,k)$. 
\end{theorem}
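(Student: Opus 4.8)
## Proof proposal

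The plan is to compare the two primal LPs directly by exhibiting the optimal primal solution of \eqref{old-primal} --- namely $a_j = \tfrac{1}{kj}$ for $1 \le j \le k$ --- and showing it is \emph{not} feasible for \eqref{new-primal}, while remaining in the feasible region one would expect, so that the new optimum is strictly below $\alpha$. The key observation is that the constraint matrices differ only in replacing $M_{ij}=kj$ by $M'_{ij}=kj-i+1$ for $j\ge i$; since the coefficients strictly decrease (for $i\ge 2$) while the right-hand sides $k-i+1$ stay the same, the feasible region of \eqref{new-primal} is a strict subset of that of \eqref{old-primal}. Concretely, plugging $a_j=\tfrac{1}{kj}$ into the $i$-th new constraint gives $\sum_{j=i}^k (kj-i+1)\tfrac{1}{kj} = (k-i+1) - (i-1)\sum_{j=i}^k \tfrac{1}{kj} < k-i+1$ for every $i \ge 2$, so the old optimizer violates all the new constraints. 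Hence $\alpha' \le \alpha$; the work is to make this inequality strict.

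First I would record the containment of feasible regions: any $a'$ feasible for \eqref{new-primal} satisfies $\sum_{j=i}^n kj a'_j \ge \sum_{j=i}^n (kj-i+1)a'_j + (i-1)\sum_{j=i}^n a'_j \ge (k-i+1) + (i-1)\sum_{j=i}^n a'_j \ge k-i+1$, using $a'_j\ge 0$, so $a'$ is feasible for \eqref{old-primal}. This already yields $\alpha' \le \alpha$. For strictness, I would argue by contradiction: suppose $\alpha'=\alpha$, so some optimizer $a'$ of \eqref{new-primal} is also an optimizer of \eqref{old-primal}. By Lemma~\ref{old-calculation} the optimizer of \eqref{old-primal} is unique (it is the vertex $M\vec a=\vec v$ with all $k$ constraints tight, and the objective coefficients $\ln(1+\tfrac{j}{k-1})$ are strictly increasing so no other vertex or face attains the max --- this uniqueness is worth a sentence or two), hence $a'_j = \tfrac{1}{kj}$ for $1\le j\le k$ and $a'_j=0$ otherwise. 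But we just computed that this vector violates the new constraints, contradicting feasibility of $a'$ for \eqref{new-primal}. Therefore $\alpha' < \alpha$.

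Then the conclusion about the bound is immediate: $\gamma_k' = (k-1)e^{\alpha'} < (k-1)e^{\alpha} = \gamma_k$, so $c_k' = \sqrt{\gamma_k'} < \sqrt{\gamma_k} = c_k$, and by Lemma~\ref{new-proof} this $c_k'$ is a valid upper bound: $M(n,k)\le (c_k')^n$ with $c_k' < c_k$.

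The main obstacle I anticipate is establishing uniqueness of the optimal primal solution of \eqref{old-primal} cleanly. Lemma~\ref{old-calculation} exhibits \emph{an} optimal solution and its dual certificate, but to run the contradiction argument I need that no \emph{other} optimal solution exists. The cleanest route is via complementary slackness with the optimal dual $b$: since all the dual inequalities $\sum_{i\le j} kj b_i \ge \ln(1+\tfrac{j}{k-1})$ are \emph{strict} for $j>k$ (shown in the proof of Lemma~\ref{old-calculation} using that $\tfrac1j\ln(1+\tfrac{j}{k-1})$ is strictly decreasing), complementary slackness forces $a_j=0$ for all $j>k$ in \emph{every} primal optimum; and since all dual variables $b_i$, $2\le i\le k$, are strictly negative (again from strict Jensen), complementary slackness forces all $k$ primal constraints tight, which together with $\sum ja_j=1$ pins down $\vec a=M^{-1}\vec v$ uniquely. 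An alternative, if one prefers to avoid leaning on strictness in Lemma~\ref{old-calculation}, is to note $M$ is upper-triangular with nonzero diagonal, hence invertible, so the tight system has a unique solution; one then only needs that \emph{every} optimum makes all constraints tight and has no mass above $k$, which is exactly the complementary-slackness point above.
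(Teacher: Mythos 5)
Your proposal is correct, but it takes a genuinely different route from the paper. The paper argues directly with the optimizer $a_j'$ of \eqref{new-primal}: it writes $\alpha'-\alpha=\sum_{j=1}^k\ln\left(1+\frac{j}{k-1}\right)\left(a_j'-\frac{1}{jk}\right)$ and performs an Abel-type rearrangement against the tight constraints $\sum_{j\geq i}(kj-i+1)a_j'=k-i+1$, so that the difference becomes $\sum_{i=2}^k\left[\frac{\ln\left(1+\frac{i}{k-1}\right)}{i}-\frac{\ln\left(1+\frac{i-1}{k-1}\right)}{i-1}\right]\cdot\frac{i-1}{k}\sum_{j\geq i}a_j'$, a sum of (negative)$\times$(positive) terms, giving $\alpha'<\alpha$ termwise with no uniqueness considerations at all. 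You instead prove containment of the feasible region of \eqref{new-primal} in that of \eqref{old-primal} (correct, since $kj=(kj-i+1)+(i-1)$ and $a_j'\geq 0$), which gives $\alpha'\leq\alpha$, and then force strictness by showing the old optimum $a_j=\frac{1}{kj}$ is the \emph{unique} optimizer of \eqref{old-primal} (via complementary slackness, using that $b_i<0$ strictly for $i\geq 2$ and that the dual constraints are strict for $j>k$ --- both follow from strict monotonicity of $\frac{\ln(1+x)}{x}$, a mild sharpening of what Lemma \ref{old-calculation} records) yet violates every new constraint with $i\geq 2$, since $\sum_{j=i}^k(kj-i+1)\frac{1}{kj}=(k-i+1)-(i-1)\sum_{j=i}^k\frac{1}{kj}$. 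Both arguments are sound; yours is more structural and needs nothing about the new optimizer beyond existence (it sidesteps the recursive description of the $a_j'$ entirely), while the paper's computation, at the cost of using the tight constraints from Lemma \ref{new-calculation}, produces an explicit expression for the gap $\alpha-\alpha'$, which is what underlies the paper's later remark that the improvement appears to be only $O(k^{-2})$. The final step in your write-up, deducing $c_k'<c_k$ from $\alpha'<\alpha$ via $\gamma_k'=(k-1)e^{\alpha'}$ and Lemma \ref{new-proof}, matches the paper's conclusion.
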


\begin{proof}
Since $\alpha = \sum_{j=1}^k a_j\ln\left(1 + \frac{j}{k-1}\right)$, the difference is $\sum_{j=1}^k \ln\left(1 + \frac{j}{k-1}\right)\left(a_j - \frac{1}{jk} \right)$. Rearranging the sums gives that this difference is:

\begin{equation}
\ln\left(1 + \frac{1}{k-1}\right)\left(\sum_{j=1 }^k ja_j' - 1\right) + \sum_{i = 2}^k \frac{\ln\left(1 + \frac{i}{k-1}\right)}{i} - \frac{\ln\left(1 + \frac{i-1}{k-1}\right)}{i-1}\left(\sum_{j=i}^k ja_j' - \frac{k-i+1}{k}\right)
\end{equation}

Because $\sum_{j=1 }^k ja_j' = 1$, the first term is simply 0. Using the other constraints in equation \ref{new-primal}, we know that $\sum_{j=i}^k (kj - i + 1)a_j' = k-i+1$. This means that $\sum_{j=i}^k ja_j' - \frac{k-i+1}{k} = \frac{i-1}{k}\sum_{j=i}^k a_j' > 0$ for each $2 \leq i \leq k$.

Using Jensen's inequality, we can see that $\frac{\ln\left(1 + \frac{i}{k-1}\right)}{i} - \frac{\ln\left(1 + \frac{i-1}{k-1}\right)}{i-1} < 0$ for every $i \geq 2$, so every other term is negative. This means that $\alpha' < \alpha$ , and so $c_k' < c_k$ is an improvement on our earlier algorithm.
\end{proof}

Experimentally, this difference appears to be $O(k^{-2})$, and thus has no effect on the asymptotic convergence in Theorem \ref{old-main}, meaning that $t_2 = t_3$.

\section{Summary of results}

While it is impossible to improve on Ryser's bound when $\lambda = \frac{k(k-1)}{n-1}$ is $\Omega(1)$ ($k = \Omega(\sqrt{n})$), for fixed $k$ we have an improvement on the previously known bounds as shown below (in order of decreasing bound):

\begin{table}[ht]
\caption{Known bounds on $M(n,k)$}\label{table-1}
\renewcommand\arraystretch{1.5}
\noindent\[
\begin{array}{|c|c|c|}
\hline
& M(n,k)\text{ bound}&\text{Asymptotic bound on behavior of }\limsup_{n} M(n,k)^{\frac{1}{n}}\\
\hline
\text{Hadamard \cite{Had_1893}} & k^\frac{n}{2} & \sqrt{k}\\
\hline
\text{Ryser \cite{Ryser_1956}} & k(k-\lambda)^\frac{n-1}{2} & \sqrt{k}\\
\hline
\text{Schienerman \cite{Scheinerman:2019aa}} & (c_{2,k})^n & \sqrt{k} -\frac{1}{4k^\frac{3}{2}}+ O(k^{-3}) \\
\hline
\text{Schienerman \cite{Scheinerman:2019aa}} & (c_{q_k,k})^n & \sqrt{k} - \frac{.1}{2\sqrt{k}} + O(k^\frac{-3}{2}) \\
\hline
\text{Theorem } \ref{old-main} & (c_k)^n & \sqrt{k} - \frac{.18}{2\sqrt{k}} + O(k^\frac{-3}{2})\\
\hline
\text{Theorem } \ref{new-main} & (c'_k)^n & \sqrt{k} - \frac{.18}{2\sqrt{k}} + O(k^\frac{-3}{2}) \footnotemark \\
\hline
\end{array}
\]
\end{table}

\footnotetext{Constant is conjectured}

None of these approaches achieve the known lower bound given by a block diagonal matrix of projective planes (asymptotically $\sqrt{k} - \frac{1}{2\sqrt{k}} + O(k^\frac{-3}{2})$), but the gap is closing. 

\subsection{Limitations and avenues for improvement}
Note that these bounds have three places where there can be loss:

\begin{enumerate}
    \item In Fischer's inequality for bounding the volume when subdividing into blocks.
    \item In the determinant estimation within a single block.
    \item In the LP relaxation of the integer program.
\end{enumerate}

Since all constraints are integral in equations \ref{old-primal} and \ref{new-primal}, the optimal $a_j$ are always rational. This means that if we choose an $n$ that is a multiple of the least common denominator, the LP solution is then an integral solution. For instance, if we consider \ref{new-primal} with $k = 3$, using $n = 35$ means we can decompose the rows into 5 blocks of 3, 6 blocks of 2, and 8 blocks of 1. For $k = 4$, we can use $n=455$ for the same result.

The determinant estimation is tight within a single block $A_i$ when $A_iA_i^\top = J_{m_i} + (k-1)I_{m_i}$. Because $A_iA_i^\top$ is a Gram matrix, and there is a column of $A_i$ that is all 1, we simply need to place the other $k-1$ ones in each row such that no two are in the same column.

In the case where \ref{new-primal} is tight that is described above, we have that $\sum\limits_{j=i}^n (kj-i + 1)x_j = (k-i+1)n$ for each $1 \leq i \leq k$. If we consider all the rows in all the $x_i$ blocks of size $i$, there are $ix_i$ rows where we need to place $k-1$ ones in each. Since none of the chosen rows can have any more ones in them there are $\sum_{j=i}^n x_j$ rows we can't use. As in the proof of Lemma \ref{new-calculation}, we know that $a_i = \frac{1-\sum_{j=i+1}^k a_j}{(k-1)i+1}$. Rescaling by $n$ gives that $n-\sum_{j=i}^k x_j = ix_i(k-1)$, so we only need to put a single one in each column for all $x_i$ blocks!

This means that our determinant estimation can be tight inside every block. In addition, this means that if we have two blocks of size $i$, if we take a vector from each block, they'll always be perpendicular. This means that $B_1B_2= 0$ in Fischer's inequality for these blocks as well, so our bound is tight there.

Since each set of blocks puts a one in each row that isn't chosen, clearly this part of Fischer's inequality isn't tight when the blocks are different sizes. Looking closer shows that each column also has $k$ non-zero entries in this example.

This suggests that the first type of error is where any improvement could be made to this bound, as opposed to the other two. However, it is likely a more nuanced approach will need to be used to balance these different errors.

To see this, we randomly generated many examples of this form for $k=3$, and found that the maximum of their determinants never exceeded the asymptotic lower bound given by the Fano plane (which is $24^\frac{n}{7}$).

This suggests that perhaps the projective plane (or a block matrix of them) has the (asymptotically) largest determinant of any matrix in $T(n,k)$. When examining the greedy algorithm used on a projective plane, we can see that the decomposition is into 1 block of size $k$ and the remainder into $k-1$ blocks of size $k-1$. Only the $i=k$ inequality is tight here, and so the projective plane is nowhere close to tight with the LP bound.

However, while you can see that this block decomposition still has that the determinant estimation is tight within each block, but there still is significant error coming from the volume submultiplicativity. Even if we preset $x_j$ to the actual number of blocks, the volume bound we get for the projective plane of with $k$ ones in each row (order $k-1$) is $(k-1)^{k^2-k+1} \frac{2k-1}{k-1}\left(\frac{2k-2}{k-1}\right)^{k-1} = (2k-1)2^{k-1}(k-1)^{k^2-k-1}$. This is much larger than the actual volume which is $k^2(k-1)^{k^2-k}$, so our this loss is about a factor of $\frac{2^k}{k^2}$.

Both of these methods of generating large determinant matrices where the row sums are $k$ actually generate matrices where the column sums are also $k$. This suggests that if we limit $A$ such that both $A$ and $A^\top$ are both in $R(n,k)$, we should get the same maximum determinants. However, unless we have specific $n$ and $k$ such that Ryser's bound is tight, this is unproven as of yet.

\section*{Acknowledgements}

Thanks to Daniel Scheinerman for introducing me to this problem, as well as the many valuable discussions we had. I would also like to thank my colleagues Keith Frankston and Yonah Biers-Ariel for reading the paper. And lastly, my advisor Swastik Kopparty for doing all of the above.

\printbibliography

\end{document}